\documentclass[11pt]{article}
\usepackage{preamble}

\title{Infinitary positive existential normal forms for modules}
\author{Rishi Banerjee%
\thanks{This material is based upon work supported by the National Science Foundation under Award No. DMS-2452105.}}
\date{August 27, 2026}

\begin{document}
\maketitle

\begin{abstract}
    Let $\theta$ be a regular cardinal, $R$ a ring, and $M$ a left $R$-module. We prove that for every ordinal $\alpha$ there is a set $I_\alpha \subseteq M^{<\theta}$ of size at most $\beth_\alpha(|R| + \theta)$ such that every parameter-free $L_{\infty,\theta}$ formula of rank at most $\alpha$ is equivalent in $M$ to an infinitary Boolean combination of cosets $\abar + \phi(M)$, where $\abar \in I_\alpha$ and $\phi$ is an infinitary positive existential formula of rank at most $\alpha$. 
    The main ingredient in the proof is a combinatorial lemma which says that given at most $\kappa$ subgroups of an abelian group, there is a set of at most $2^\kappa$ points which tests whether any family in which each member is either empty or a coset of the corresponding subgroup covers the whole group.
    The proof proceeds by applying this lemma fiberwise to show that the relevant complete Boolean algebras of positive-existentially definable cosets are closed under projections. 
\end{abstract}

\section*{Acknowledgments}
    I would like to thank John Baldwin for introducing me to this problem and for extensive feedback on the paper. I am also grateful to Matthew Harrison-Trainor and Andr\'es Villaveces for their feedback.

\section{Introduction}

Positive-primitive (pp) formulas play a central role in the first-order model theory of abelian groups and modules. A first-order formula is pp if it has the form 
\[ \exists \ybar \bigwedge_{i < n} \phi_i(\xbar, \ybar), \]
where the $\phi_i$ are atomic formulas. 
In a module $M$, parameter-free pp formulas define subgroups of finite powers of $M$, and consistent pp formulas with parameters define cosets of such subgroups.

Szmielew proved that every first-order formula in the language of abelian groups is equivalent modulo the theory of abelian groups to a finite Boolean combination of pp formulas and sentences recording natural invariants such as the group exponent and the dimensions of certain quotients \cite{Szmielew1955}. In a fixed group $G$, the invariant sentences have fixed truth values, so every formula is equivalent in $G$ to a finite Boolean combination of pp formulas.\footnote{Szmielew does not use the term ``positive primitive'', and instead refers to equality and congruence functions, but these are pp-definable.}

Baur generalized this result to modules over arbitrary rings, while Fisher developed the corresponding theory for the still more general setting of abelian structures \cite{Baur1976,Fisher1977}. Given a ring $R$, every first-order formula about left $R$-modules is equivalent over the theory of left $R$-modules to a finite Boolean combination of pp formulas and $\forall \exists$ sentences. Thus in a fixed module $M$, every first-order formula is equivalent to a finite Boolean combination of pp formulas.

In \cite{Shelah2012}, Shelah considered an analogous theorem for the infinitary logic $L_{\infty,\theta}$, where $\theta$ is a regular cardinal. In place of pp formulas, he defines a transfinite hierarchy of infinitary positive existential (pe) formulas, obtained by starting with homogeneous linear equations and repeatedly taking set-sized conjunctions and applying blocks of existential quantifiers of length $< \theta$. The precise definition of the pe hierarchy is given in the next section. The proposed theorem in \cite{Shelah2012}, which is also the main theorem in the present paper, is the following. 

\begin{theorem}[Main theorem]
\label{thm:inf-qe}
    Let $R$ be a ring, $\theta$ a regular cardinal, and $M$ a left $R$-module. 
    There is an increasing sequence of sets of parameter tuples
    \[
    (I_\alpha)_{\alpha \in \Ord}, \qquad I_\alpha \subseteq M^{<\theta}, \qquad |I_\alpha| \leq \beth_\alpha(|R| + \theta),
    \]
   such that every parameter-free $L_{\infty,\theta}$ formula $\phi(\xbar)$ of rank at most $\alpha$  is equivalent in $M$ to an infinitary Boolean combination of formulas of the form $\psi(\xbar - \abar)$, where $\abar \in I_\alpha$ and $\psi(\xbar)$ is pe of rank at most $\alpha$. 
\end{theorem}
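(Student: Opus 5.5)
We argue by induction on the rank $\alpha$, following the outline in the abstract. For each $\alpha$ we want a set $I_\alpha$ and a complete Boolean algebra $\mathcal{B}_\alpha(\xbar)$ of subsets of $M^{\xbar}$ (for each tuple of variables $\xbar$ of length $<\theta$). This algebra is generated, under arbitrary set-sized unions, intersections and complements, by the cosets $\abar + \psi(M)$ with $\abar \in I_\alpha$ and $\psi$ pe of rank at most $\alpha$. Atomic formulas are homogeneous linear equations, so they are pe of rank $0$ and define subgroups. Boolean operations keep us inside $\mathcal{B}_\alpha$ by definition. Hence the whole content of the theorem is closure under the quantifier step. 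If $\phi(\xbar) = \exists \ybar\, \chi(\xbar,\ybar)$ with $\chi$ of rank $<\alpha$ and $\ell(\ybar) < \theta$, then the projection of a set in $\mathcal{B}_{<\alpha}(\xbar\ybar)$ must lie in $\mathcal{B}_\alpha(\xbar)$. Universal quantifiers then follow by complementation.

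First we normalize $\chi$. It is an infinitary Boolean combination of cosets, so we write it in disjunctive normal form: a union of "cells", each of the form
\[
\bigcap_{i}(\abar_i + G_i) \cap \bigcap_{j} \big(M^{\xbar\ybar} \setminus (\bbar_j + H_j)\big),
\]
where $G_i, H_j$ are pe-definable subgroups. The positive part is empty or a single coset $\cbar + G$ with $G = \bigcap_i G_i$, which is pe-definable by a conjunction. Projection commutes with unions, so it suffices to project one cell. The point $\xbar$ lies in the projection iff the fiber of $\cbar + G$ over $\xbar$ is not covered by the fibers of the $\bbar_j + H_j$. Each of these fibers is either empty or a coset of a fixed subgroup of $M^{\ybar}$: namely $G(\bar 0, M)$ and $H_j(\bar 0, M)$ respectively. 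This non-covering condition is exactly where the combinatorial lemma enters. There are at most $\kappa = \beth_{<\alpha}(|R|+\theta)$-many relevant subgroups, counted over formulas and parameters of lower rank up to equivalence in $M$. The lemma then gives a test set $T$ of at most $2^\kappa$ points with the following property: a family of empty-or-coset sets covers the group iff it covers every point of $T$ after suitable translation.

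Using $T$ we rewrite the projection as
\[
\bigcup_{t \in T} \Big( \pi\big(\cbar + G\big) \cap \bigcap_j \{\xbar : (\xbar, \text{shifted } t) \notin \bbar_j + H_j\} \Big).
\]
Each inner condition "$(\xbar,\, \dbar(\xbar) + t) \in \bbar_j + H_j$", where $\dbar(\xbar)$ is a point of the fiber, is expressed as an existential pe condition of the form $\exists \ybar\,(\ldots \in G \wedge \ldots \in H_j)$, using new parameters from $T$. Its rank is at most $\alpha$, and its parameters consist of the old parameters together with elements of $T$. We let $I_\alpha$ be $I_{<\alpha}$ together with all tuples built from these test points. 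The count is $|I_\alpha| \le 2^{\beth_{<\alpha}} \cdot \ldots = \beth_\alpha(|R|+\theta)$, using that the number of rank-$<\alpha$ pe formulas up to equivalence is bounded by the number of subsets of $M$-tuples at the previous level. Monotonicity of the sequence is by construction.

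The main obstacle is the fiberwise application. The test set has to work uniformly in $\xbar$, and the translated point depends on $\xbar$ through the choice of $\dbar(\xbar)$. We would handle this by applying the lemma to the group $M^{\ybar}$ with the subgroup family $\{G(\bar 0, M)\} \cup \{H_j(\bar 0, M)\}$, which is independent of $\xbar$. Since $\dbar(\xbar)$ varies, we replace it by a quantifier: "there exists $\ybar$ in the fiber such that $\ybar - t$ avoids all $\bbar_j+H_j$ fibers". This is not itself pe. So we must check that non-covering reduces to finitely or $<\theta$ many such tests, each expressible as a Boolean combination of projections of cosets. Projections of cosets are again cosets of pe subgroups of one higher rank. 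This is where we expect the real work, and the cardinal bookkeeping, to lie.
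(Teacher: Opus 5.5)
You have the right architecture: disjunctive normal form, projecting one atom $Q=C\setminus\bigcup_{j\in J}D_j$ at a time, and applying the test-set lemma on fibers. But the step that carries the theorem is left open, and the route you sketch for it does not work.

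Your displayed union $\bigcup_{t\in T}(\cdots\notin\cdots)$ describes $\pi(Q)$ by the \emph{negative} condition that some test point escapes every $D_j$. Once the choice of base point $\xbar\mapsto(\text{point of the fiber})$ is replaced by a quantifier, ``there is $\ybar\in C_{\xbar}$ with $\ybar+t\notin\bigcup_j(D_j)_{\xbar}$'' is, for each $t\in G$, literally equivalent to $\xbar\in\pi(Q)$. So nothing has been gained. Cutting $T$ down to fewer than $\theta$ tests is the wrong fix. It is unavailable in general, since the sharpness example after the lemma shows test sets of size $2^\kappa$ can be needed. It is also unnecessary, since pe formulas allow set-sized conjunctions. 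The obstruction is polarity, not size.

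The missing idea is to describe the complement, i.e.\ to express \emph{covering} positively. Set up the data as follows:
\begin{itemize}
\item $C$ is a coset of $P$ (your $G$) and $D_j$ is a coset of $N_j$ (your $H_j$);
\item the vertical subgroups are $G=\set{g\mid (0,g)\in P}$ and $G_j=\set{g\mid (0,g)\in N_j}$;
\item $T\subseteq G$ is a test set for $(G,(G\cap G_j)_{j\in J})$.
\end{itemize}
The lemma must be applied to $G$ itself, not to $M^{\ybar}$, because what is tested is whether the coset $C_{\xbar}$ is covered, not the whole of $M^{\ybar}$. Since $C_{\xbar}-\ybar=G$ for \emph{every} $\ybar\in C_{\xbar}$, the test works at any base point. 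This removes your uniformity problem: for $\xbar\in\pi(C)$,
\[
\xbar\notin\pi(Q)\iff\exists\ybar\in C_{\xbar}\ \exists f\colon T\to J\ \forall t\in T\ (\xbar,\ybar+t)\in D_{f(t)}.
\]
Now pull $\exists f$ outside as a union. This gives $\pi(Q)=\pi(C)\setminus\bigcup_f\pi(E_f)$ with $E_f=C\cap\bigcap_{t\in T}(D_{f(t)}-(0,t))$. Each $E_f$ is empty or a coset of $P\cap\bigcap_{j\in f(T)}N_j$. So for any $b_f\in E_f$, $\pi(E_f)=\pi(b_f)+\pi(P\cap\bigcap_{j\in f(T)}N_j)$ is a translate of a pe formula of rank one more than $\chi$.

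The new parameters are therefore $\pi(a)$ (for some $a\in C$) and the projected witnesses $\pi(b_f)$, which are tuples in the $\xbar$-coordinates. They are not the test points, which live in the $\ybar$-coordinates; translating by those inside $\exists\ybar$ does not produce the required shape $\psi(\xbar-\abar)$.

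Finally, your bound on $|I_\alpha|$ needs an argument you have not given. There are $|J|^{|T|}$ maps $f$, which can be $2^{2^\kappa}$. However, if $E_f\neq\emptyset$ and $f(t)=f(t')=j$, then $D_j-(0,t)=D_j-(0,t')$. Hence a nonempty $E_f$ is determined by a partial map from $J$ to $T$, and there are at most $(2^\kappa+1)^\kappa=2^\kappa$ of these.
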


Thus in a fixed module $M$, for any $\alpha$, we have quantifier elimination for $L_{\infty,\theta}$ formulas of rank at most $\alpha$ after expanding the language by adding at most $\beth_\alpha(|R| + \theta)$ new relation and constant symbols naming the subgroups defined by formulas in $\Lambda_\alpha$ and the parameters we translate them by. The need for model-dependent parameters even when the original formula is parameter-free is an important difference from the first-order case, and the content of the theorem is that the number of parameters needed can be bounded independently of $M$.

A gap in Shelah's 2012 proof of \cref{thm:inf-qe} was identified by John Baldwin in 2026. A revised version of Shelah’s paper is available in his online archive under the identifier Sh:977. Here we give an independent proof by a different method, based on a simple combinatorial lemma which provides a small set of ``test points'' for deciding whether a family of cosets covers an abelian group.

\section{Infinitary positive existential normal forms}

\subsection{Infinitary positive existential formulas}

Fix a regular cardinal $\theta$ and a ring $R$. Let $\tau_R$ be the language of left $R$-modules, and fix a set of variables $\set{x_i \mid i \in \theta}$. For any ordinal $\alpha$, let 
\[ \kappa_\alpha = \beth_\alpha(|R| + \theta). \]

We work in the infinitary logic $L_{\infty,\theta}$ where arbitrary set-sized conjunctions and disjunctions are allowed, but existential and universal quantifiers can only occur in blocks of length $< \theta$. We write $L_{\infty,\theta,\alpha}$ for the set of $L_{\infty,\theta}$ formulas of quantifier rank at most $\alpha$. The notion of quantifier rank we use here counts only quantifiers as increasing the rank, not conjunctions or disjunctions.
\begin{itemize}
    \item Atomic formulas are rank $0$.

    \item Negations preserve rank.

    \item The ranks of $\bigvee_{\phi \in \Phi} \phi$ and $\bigwedge_{\phi \in \Phi} \phi$ are both $\sup_{\phi \in \Phi} \rank(\phi)$.

    \item If $\phi(\xbar,\ybar)$ has rank $\alpha$ and does not start with an existential quantifier, then the rank of $\exists \ybar \phi(\xbar,\ybar)$ is $\alpha + 1$.

    \item If $\phi(\xbar,\ybar)$ has rank $\alpha$ and does not start with a universal quantifier, then the rank of $\forall \ybar \phi(\xbar,\ybar)$ is $\alpha + 1$.
\end{itemize}

Following Shelah, we define a hierarchy of infinitary positive existential (pe) formulas in the logic $L_{\infty,\theta}$ as follows. For any ordinal $\alpha$ and $\epsilon < \theta$, write $\Lambda_{\alpha,\epsilon}$ for the set of pe formulas of rank at most $\alpha$ with free variables in $\set{x_i \mid i \in \epsilon}$.
\begin{itemize}
    \item $\Lambda_{0,\epsilon}$ is the set of homogeneous $R$-linear equations in the first $\epsilon$ free variables:
        \[ a_1 x_{\xi_1} + \dots + a_n x_{\xi_n} = 0, \]
    where $a_1,\dots,a_n \in R$ and $\xi_1,\dots,\xi_n < \epsilon$.
    
    \item $\Lambda_{\alpha+1,\epsilon}$ is the set of formulas of the form 
    \[ 
        \exists \ybar \bigwedge_{\phi \in \Phi} \phi(\xbar,\ybar), 
    \]
    where $|\ybar| = \zeta < \theta$ and $\Phi \subseteq \Lambda_{\alpha, \epsilon+\zeta}$. (Note that $\Lambda_{\alpha,\epsilon} \subseteq \Lambda_{\alpha+1,\epsilon}$ by taking $\zeta = 0$ and letting $\Phi$ be a singleton.)
    
    \item For a limit ordinal $\lambda$, 
    \[ \Lambda_{\lambda,\epsilon} = \bigcup_{\beta < \lambda} \Lambda_{\beta,\epsilon}. \]
\end{itemize}

Also, put $\Lambda_\alpha = \bigcup_{\epsilon < \theta} \Lambda_{\alpha,\epsilon}$.

\begin{proposition}
    $|\Lambda_\alpha| \leq \kappa_\alpha$.
\end{proposition}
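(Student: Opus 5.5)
The proof is by transfinite induction on $\alpha$. The claim I will prove, for all $\alpha$ at once, is that $|\Lambda_{\alpha,\epsilon}| \leq \kappa_\alpha$ for every $\epsilon < \theta$. The proposition then follows, because $\Lambda_\alpha$ is a union of $\theta$ many such sets and
\[ |\Lambda_\alpha| \leq \theta \cdot \kappa_\alpha = \kappa_\alpha, \]
using $\theta \leq |R| + \theta \leq \kappa_\alpha$. Throughout I treat formulas as syntactic objects built from the fixed variable set $\set{x_i \mid i \in \theta}$. The only point needing care is that in the successor clause the quantified tuple $\ybar$ consists of the variables $x_\xi$ for $\epsilon \leq \xi < \epsilon+\zeta$. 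Since $\theta$ is an infinite cardinal, $\epsilon + \zeta < \theta$, so $\Lambda_{\alpha,\epsilon+\zeta}$ is one of the sets covered by the induction hypothesis.

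\emph{Base case.} A formula in $\Lambda_{0,\epsilon}$ is determined by a finite sequence of pairs $(a_i,\xi_i) \in R \times \epsilon$. There are at most $|R| + |\epsilon| + \aleph_0 \leq |R| + \theta = \kappa_0$ such sequences, so $|\Lambda_{0,\epsilon}| \leq \kappa_0$.

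\emph{Successor step.} A formula in $\Lambda_{\alpha+1,\epsilon}$ is determined by a pair $(\zeta,\Phi)$ with $\zeta<\theta$ and $\Phi \subseteq \Lambda_{\alpha,\epsilon+\zeta}$. The tuple $\ybar$ is determined by $\zeta$ and $\epsilon$, and $\epsilon$ is fixed here. By the induction hypothesis each $\Lambda_{\alpha,\epsilon+\zeta}$ has size at most $\kappa_\alpha$, so there are at most $2^{\kappa_\alpha}$ choices of $\Phi$ for each $\zeta$. This gives
\[ |\Lambda_{\alpha+1,\epsilon}| \leq \theta \cdot 2^{\kappa_\alpha} = 2^{\kappa_\alpha} = \beth_{\alpha+1}(|R|+\theta) = \kappa_{\alpha+1}. \]

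\emph{Limit step.} For limit $\lambda$, $\Lambda_{\lambda,\epsilon}$ is the union of the sets $\Lambda_{\beta,\epsilon}$ for $\beta<\lambda$. Hence
\[ |\Lambda_{\lambda,\epsilon}| \leq |\lambda| \cdot \sup_{\beta<\lambda}\kappa_\beta = |\lambda|\cdot\kappa_\lambda. \]
It remains to see that $|\lambda| \leq \kappa_\lambda$. This holds because $\beth_\lambda(\mu) \geq \beth_\lambda \geq |\lambda|$, where the last inequality is a standard induction on ordinals ($\beth_\gamma \geq |\gamma|$).

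There is no real obstacle in this argument. The steps most worth checking are the bookkeeping that the quantified variables are canonical, so that a formula is genuinely coded by $(\zeta,\Phi)$, and the cardinal inequality $|\lambda| \leq \kappa_\lambda$ at limit stages.
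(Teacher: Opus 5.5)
Your proof is correct and follows essentially the same induction as the paper: count the linear equations at rank $0$, bound the choices of $(\zeta,\Phi)$ by $\theta \cdot 2^{\kappa_\alpha}$ at successors, and take the union at limits. The only differences are that you bound each $\Lambda_{\alpha,\epsilon}$ separately and sum over $\epsilon$ at the end, whereas the paper sums over $\epsilon,\zeta$ inside the successor step, and that you make explicit the inequality $|\lambda| \leq \kappa_\lambda$, which the paper uses without comment.
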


\begin{proof}
    At rank $0$ there are 
    \[ 
        |R \times \theta|^{<\omega} = |R| + |\theta| = \kappa_0
    \]
    homogeneous linear equations in the variables $\set{x_i}_{i < \theta}$.

    At a limit ordinal $\lambda$, 
    \[ 
    |\Lambda_\lambda| 
    \leq \sum_{\beta < \lambda} |\Lambda_{\beta}| 
    \leq |\lambda| \cdot \sup_{\beta < \lambda} \kappa_\beta 
    = \kappa_\lambda. 
    \]

    For the successor case, a formula in $\Lambda_{\alpha+1,\epsilon}$ is determined by a choice of $\zeta < \theta$ together with a subset $\Phi \subseteq \Lambda_{\alpha,\epsilon+\zeta}$, so
    \[
        |\Lambda_{\alpha+1}| 
        \leq \sum_{\epsilon,\zeta < \theta} 2^{|\Lambda_{\alpha,\epsilon+\zeta}|}
        \leq \theta^2 \cdot 2^{\kappa_\alpha} 
        = \kappa_{\alpha+1}. \qedhere
    \]
\end{proof}

In a left $R$-module $M$, any formula $\phi(\xbar) \in \Lambda_{\alpha,\epsilon}$ defines a subgroup $\phi(M) \leq M^\epsilon$. This is because the set of solutions of a homogeneous linear equation is a subgroup, conjunctions and existential quantifiers correspond semantically to intersections and projections, and intersections and projections of subgroups are still subgroups. 

Translates of pe formulas define cosets of the corresponding subgroups. If $\abar \in M^\epsilon$ then $\phi(\xbar - \abar)$ defines the coset $\abar + \phi(M)$. Given a set of tuples $I \subseteq M^{<\theta}$, let
\begin{align*}
    \Cos_{\alpha,\epsilon}(I) &= \set{ \abar + \phi(M) \mid \abar \in I \cap M^\epsilon, \phi \in \Lambda_{\alpha,\epsilon} }, \\
    \Cos_\alpha(I) &= \bigcup_{\epsilon < \theta} \Cos_{\alpha,\epsilon}(I).
\end{align*}

We will be considering Boolean combinations of such cosets. Given a set $X$ and $\mc{C} \subseteq \mc{P}(X)$, let $\Bool_\infty(\mc{C})$ be the least complete Boolean subalgebra of $\mc{P}(X)$ containing $\mc{C}$. 
Every element of $\Bool_\infty(\mc{C})$ can be written as a union of Boolean atoms, where a Boolean atom is a nonempty set of the form 
\[ 
    Q_\eta = \bigcap_{\eta(C) = 1} C \setminus \bigcup_{\eta(C) = 0} C
\]
for $\eta: \mc{C} \to 2$. 

With this notation, the conclusion of \cref{thm:inf-qe} can be written
\[
    \phi(M) \in \Bool_\infty(\Cos_{\alpha,\epsilon}(I_\alpha)).
\]

\begin{remark}[Small models and stability consequences]
    If $|M^{<\theta}| \leq \kappa_\alpha$ and $\alpha \geq 1$, then the conclusion of the theorem follows trivially by setting $I_\alpha = M^{<\theta}$. Then every singleton is in $\Cos_\alpha(I_\alpha)$. And for any $\epsilon < \theta$, every subset of $M^\epsilon$ (definable or not) is a union of singletons, so belongs to $\Bool_\infty( \Cos_{\alpha,\epsilon}(I_\alpha))$. So the substantive case of the theorem is $|M^{<\theta}| > \kappa_\alpha$.

    This does not conflict with the stability results in \cite{Shelah2012}. In the expansion of the small model, every relation is definable, but there are not enough tuples to realize a linear order of length $(\kappa_{\alpha+1})^+$.
\end{remark}

The proof of \cref{thm:inf-qe} is by induction, and the only non-trivial case is the quantifier case.
Proving this case amounts to showing that the projection of a Boolean combination of cosets in $\Cos_{\alpha,\epsilon+\zeta}(I_\alpha)$, can be written as a Boolean combination of cosets in $\Cos_{\alpha+1,\epsilon}(I_{\alpha+1})$.
Using the fact that any element of a complete Boolean algebra can be written in disjunctive normal form, this simplifies to a special case which we prove in \cref{lem:proj-atom} where the set being projected is a single Boolean atom in $\Bool_\infty(\Cos_{\alpha,\epsilon+\zeta}(I_\alpha))$, i.e. a set of the form
\[ \bigcap_{i \in I} C_i \setminus \bigcup_{j \in J} D_j, \]
where $\set{C_i \mid i \in I}$ and $\set{D_j \mid j \in J}$ form a partition of $\Cos_{\alpha,\epsilon+\zeta}(I_\alpha)$.
Finally, the proof of \cref{lem:proj-atom} uses \cref{lem:test-set}, which shows the existence of a test set of size $2^\kappa$ for determining if a group is covered by cosets of a fixed collection of $\kappa$ many subgroups.
This will give us a pe way of expressing the condition that $\bigcup_{j \in J} D_j$ covers $\bigcap_{i \in I} C_i$ using only $2^{\kappa_\alpha}$ many new parameters. 

We begin with \cref{lem:test-set}.

\subsection{A small test set for covers by cosets}

\begin{definition}
    Given a group $G$ and a family of subgroups $(G_s)_{s \in S}$, say that a set $T \subseteq G$ is a \textbf{test set for $(G,(G_s)_{s \in S})$} if, for every family $(C_s)_{s \in S}$ where each $C_s$ is either empty or a coset of $G_s$,
    \[ 
        G = \bigcup_{s \in S} C_s \iff T \subseteq \bigcup_{s \in S} C_s.
    \]
\end{definition}

\begin{lemma}
\label{lem:test-set}
    Let $\kappa$ be an infinite cardinal, let $G$ be an abelian group, and let $(G_s)_{s \in S}$ be a family of subgroups of $G$ with $|S| \leq \kappa$. Then there is a test set $T$ for $(G, (G_s)_{s \in S})$ such that $|T| \leq 2^\kappa$.
\end{lemma}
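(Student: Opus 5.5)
The plan is to reduce the covering question to combinatorics in a product of quotients, and then to build $T$ as a closure of size $2^\kappa$ that reflects enough of the pattern of intersections of the $G_s$.

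\emph{Reduction.} First discard the trivial case. If $G$ is covered, any $T$ works for that family. So the content is: whenever some $g \in G$ lies outside $\bigcup_s C_s$, some $t \in T$ also lies outside. Membership $g \in C_s$ depends only on the class $g + G_s$. So I would pass to the diagonal map
\[ \pi \colon G \to \prod_{s \in S} G/G_s, \qquad g \mapsto (g + G_s)_{s \in S}, \]
whose kernel is $H_S = \bigcap_s G_s$. A family $(C_s)$ amounts to a partial choice of forbidden value $c_s \in G/G_s$ for $s$ in the set $A = \set{ s \mid C_s \neq \emptyset }$. The point $g$ is uncovered iff $\pi(g)_s \neq c_s$ for all $s \in A$. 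For a set $B \subseteq S$ write $H_B = \bigcap_{s \in B} G_s$; there are at most $2^\kappa$ such subgroups. The idea is that an uncovered $g$ can be replaced by any $t$ which agrees with $g$ modulo $G_s$ on a suitable set $B$, and which is generic enough on $S \setminus B$ to avoid the remaining forbidden values.

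\emph{Construction of $T$.} I would build $T$ as the union of an increasing chain $(T_n)_{n<\omega}$ of subgroups, each of size $\leq 2^\kappa$. At each stage, for every $B \subseteq S$, every $s \in S$, and every finite tuple from the current set, I add witnesses for finitely describable facts. A typical fact is whether a given coset of $H_B$ determined by the tuple meets $G_s$, or meets a given class of $G/G_s$ named by an element of the tuple, or avoids a finite set of such classes. Equivalently, $T$ can be taken as a substructure of size $2^\kappa$ of the expansion $(G, +, (H_B)_{B \subseteq S})$ that is elementary for $L_{\omega,\omega}$. This works by downward L\"owenheim--Skolem, since the vocabulary has size $2^\kappa$. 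Then, for each $B$, the map $T/(T \cap H_B) \to G/H_B$ reflects all finite configurations of the $G_s$.

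\emph{Transfer, the main obstacle.} Given an uncovered $g$, I must produce an uncovered $t \in T$. The difficulty is that the forbidden classes $c_s$ are arbitrary parameters in $G$, not in $T$, and there are infinitely many of them. Hence first-order elementarity alone cannot handle the condition ``$t \notin C_s$ for all $s$''. My first attempt would go as follows. Let $B$ be the set of $s \in A$ such that $c_s$ meets $T$, meaning $c_s \cap T \neq \emptyset$. For $s \in A \setminus B$ every $t \in T$ avoids $C_s$ automatically. So it suffices to find $t \in T$ avoiding the $C_s$ for $s \in B$, and for those $s$ the forbidden classes have representatives in $T$. This reduces the problem to the case where all parameters lie in $T$, but with $|B|$ possibly infinite.

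To handle infinitely many parameters from $T$, I would strengthen the closure. The expansion would be elementary for $L_{\kappa^+,\omega}$ (in the sense of the logic $L_{\kappa^+,\kappa^+}$ restricted to finitely many quantifiers). In addition it would be closed under $\kappa$-sequences, so that the single formula
\[ \exists x \bigwedge_{s \in B} \neg (x - b_s \in G_s) \]
with parameters $(b_s)_{s\in B}$ from $T$ reflects from $G$ to $T$. Closing under $\kappa$-sequences keeps the size at $(2^\kappa)^\kappa = 2^\kappa$, which is exactly where the bound $2^\kappa$ comes from. The step I expect to need the most care is checking that this closure is a legitimate L\"owenheim--Skolem argument for $L_{\kappa^+,\omega}$ with parameters ranging over $\kappa$-sequences of $T$, while staying at size $2^\kappa$.
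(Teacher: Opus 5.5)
Your proof is correct, and it takes a genuinely different route from the paper's.

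\textbf{The paper's route.} It argues by contradiction. Assuming no test set of size $\le 2^\kappa$ exists, it builds points $g_\alpha$ and families $(C^\alpha_s)_{s\in S}$ for $\alpha<(2^\kappa)^+$. Each family covers every earlier $g_\beta$ but misses $g_\alpha$. It then colors each pair $\beta<\alpha$ by some $s$ with $g_\beta\in C^\alpha_s$. A monochromatic triangle from the Erd\H{o}s-Rado theorem $(2^\kappa)^+\to(3)^2_\kappa$ gives the contradiction.

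\textbf{Your route.} You build $T$ directly as a closure. Your decisive observation is that cosets disjoint from $T$ do not matter for whether $T$ is covered. Every other $C_s$ equals $b_s+G_s$ for some $b_s\in T$. So only the at most $2^\kappa\cdot|T|^\kappa$ families indexed by pairs $(B,b)$, with $B\subseteq S$ and $b\in T^B$, ever need to be reflected. With that, your transfer step from an uncovered $g$ to an uncovered $t\in T$ is sound.

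\textbf{Two remarks on the construction.} First, you do not need the $L_{\kappa^+,\omega}$ elementarity, the predicates $H_B$, or the finite-configuration witnesses. It suffices to close $T$ under a single witness function. For each $(B,b)$ with $\bigcup_{s\in B}(b_s+G_s)\neq G$, it picks a point of $G$ outside this union.

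Second, what you call closure under $\kappa$-sequences must be carried out by a chain of length $\kappa^+$, not the $\omega$-chain of your first attempt. By regularity of $\kappa^+$, every $b\in T^B$ then already lies in an earlier stage. Each stage has size $\le 2^\kappa\cdot(2^\kappa)^\kappa=2^\kappa$, so $|T|\le\kappa^+\cdot 2^\kappa=2^\kappa$.

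\textbf{What each approach buys.} Yours avoids partition calculus and produces $T$ explicitly rather than by contradiction. It makes transparent that the bound comes exactly from $(2^\kappa)^\kappa=2^\kappa$, and it lets $T$ contain any prescribed set of size $\le 2^\kappa$. Like the paper's argument, it uses only that each $C_s$ is determined by any one of its points, so it also handles the equivalence-relation version in the paper's remark. The paper's route is shorter, since all the bookkeeping is absorbed into one classical theorem. It also uses the same Erd\H{o}s-Rado pattern that reappears in the later lemma on long linear orders.
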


\begin{proof}
    Suppose that there is no such test set $T$. 
    Then we can define by transfinite induction for each $\alpha < (2^\kappa)^+$ an element $g_\alpha \in G$ and a family $(C_s^\alpha)_{s \in S}$ such that each $C_s^\alpha$ is either empty or a coset of $G_s$, 
    \[ g_\beta \in \bigcup_{s \in S} C_s^\alpha \text{ for all } \beta < \alpha, \] 
    and 
    \[ g_\alpha \not \in \bigcup_{s \in S} C_s^\alpha. \tag{$\ast$} \label{eq:test-set-contradiction} \]
    This is possible because at stage $\alpha$, $\set{g_\beta : \beta < \alpha}$ has cardinality at most $2^\kappa$, so it is not a test set by assumption. 
    This is witnessed by a family $(C_s^\alpha)_{s \in S}$ of the appropriate form which covers $\set{g_\beta : \beta < \alpha}$ but does not cover $G$; let $g_\alpha$ be any element of $G \setminus \bigcup_{s \in S} C_s^\alpha$.

    Now for each $\beta < \alpha < (2^\kappa)^+$, pick a color $c(\beta,\alpha) \in S$ such that $g_\beta \in C_{c(\beta,\alpha)}^\alpha$. 
    By the Erd\H{o}s-Rado theorem \cite{ER1956},
    \[ 
        (2^\kappa)^+ \to (3)^2_\kappa, 
    \]
    there is a monochromatic triangle, $\beta_0 < \beta_1 < \beta_2 < (2^\kappa)^+$ such that all pairs $(\beta_0,\beta_1), (\beta_0,\beta_2), (\beta_1,\beta_2)$ have the same color $s \in S$. 
    Then $g_{\beta_0}, g_{\beta_1}$ are both in the coset $C_s^{\beta_2}$, so $g_{\beta_1} - g_{\beta_0} \in G_s$. 
    But $g_{\beta_0} \in C_s^{\beta_1}$, so $g_{\beta_1} \in C_s^{\beta_1}$, contradicting \eqref{eq:test-set-contradiction}.
\end{proof}

\begin{remark}
    The same result holds for any family $(E_s)_{s \in S}$ of at most $\kappa$ equivalence relations on a set $X$, replacing ``coset of $G_s$'' with ``$E_s$-class''.
\end{remark}

\begin{example}
    The following example shows that the bound on the test set $T$ in \cref{lem:test-set} is sharp. 

    Let $G = 2^\kappa$ with the group operation being coordinatewise addition modulo 2. For each $\alpha < \kappa$, let $G_\alpha = \set{g \in G \mid g(\alpha) = 0} \leq G$. Suppose $T \subseteq G$ with $|T| < 2^\kappa$. Then there is some $x \in G \setminus T$. For each $\alpha < \kappa$, let $i_\alpha \in G$ be the indicator function of the singleton $\set{\alpha}$, and let
    \[
    C_\alpha = \set{g \in G \mid g(\alpha) \neq x(\alpha)} = (x + i_\alpha) + G_\alpha. 
    \]
    Then each $C_\alpha$ is a coset of $G_\alpha$, but $\bigcup_{\alpha < \kappa} C_\alpha = G \setminus \set{x}$, so the $C_\alpha$'s cover $T$ but not $G$.
\end{example}

\subsection{Projection of one Boolean atom}

Let $M$ be a module, $\epsilon,\zeta < \theta$, and let $\pi: M^{\epsilon + \zeta} \to M^\epsilon$ be the projection map which forgets the last $\zeta$ coordinates.

\begin{lemma}
\label{lem:proj-atom}
    Let $\kappa$ be infinite and let 
    \[ Q = \bigcap_{i \in I} C_i \setminus \bigcup_{j \in J} D_j, \]
    where the $C_i$ and $D_j$ are cosets of subgroups $P_i, N_j \leq M^{\epsilon+ \zeta}$ and $|I|, |J| \leq \kappa$.
    
    If $Q \neq \emptyset$, then there is $a \in M^{\epsilon + \zeta}$, an index set $K$ with $|K| \leq 2^\kappa$, a family $(b_k)_{k \in K}$ where each $b_k \in M^{\epsilon + \zeta}$, and a family $(J_k)_{k \in K}$ where each $J_k \subseteq J$, such that 
    \[ 
        \pi(Q) = \left( \pi(a) + \pi(P) \right) \setminus \bigcup_{k \in K}  \left[ \pi(b_k) + \pi \left( P \cap \bigcap_{j \in J_k} N_j \right) \right],
    \]
    where $P = \bigcap_i P_i$.
\end{lemma}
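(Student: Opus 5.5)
Since $Q\neq\emptyset$, I will fix $a\in Q$. Then $\bigcap_i C_i = a+P$, and $Q\subseteq a+P$, so $\pi(Q)\subseteq \pi(a)+\pi(P)$. For $y\in \pi(a)+\pi(P)$, consider its fiber inside $a+P$:
\[ F_y = \set{z \in a+P \mid \pi(z)=y}. \]
This is a coset of $P^0 := P\cap \ker\pi$. Then $y\notin\pi(Q)$ iff $F_y\subseteq\bigcup_j D_j$, i.e.\ iff $F_y=\bigcup_j (D_j\cap F_y)$. Each $D_j\cap F_y$ is either empty or a coset of $N_j\cap P^0$. Translating $F_y$ to $P^0$ via some fixed $z_y\in F_y$, the family $(D_j\cap F_y - z_y)_{j\in J}$ consists of sets that are empty or cosets of $N_j\cap P^0$ inside the abelian group $P^0$. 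I will apply \cref{lem:test-set} to $(P^0,(N_j\cap P^0)_{j\in J})$ to obtain a test set $T\subseteq P^0$ with $|T|\le 2^\kappa$. This $T$ is independent of $y$, which is the key point of working fiberwise. So $y\notin\pi(Q)$ iff $z_y+T\subseteq\bigcup_j D_j$, i.e.\ iff there is a function $f:T\to J$ with $z_y+t\in D_{f(t)}$ for all $t\in T$.

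Next I will convert this condition into a union of projected cosets. For each $f:T\to J$ let
\[ S_f=\set{z\in a+P \mid z+t\in D_{f(t)} \text{ for all } t\in T}. \]
If nonempty, it is a coset of $P\cap\bigcap_{t\in T}N_{f(t)}$, since $z,z'\in S_f$ gives $z-z'\in N_{f(t)}\cap P$. Then $y\notin\pi(Q)$ with $y\in\pi(a)+\pi(P)$ iff some $z\in F_y$ lies in some $S_f$. The direction requiring care is this: if $z_y$ works, any $z\in F_y$ works too, because the test set criterion is independent of the chosen base point, as $z-z_y\in P^0$ and translating by an element of $P^0$ preserves the fiber. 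So the complement of $\pi(Q)$ in $\pi(a)+\pi(P)$ is $\bigcup_f \pi(S_f)$, and $\pi(S_f)=\pi(b_f)+\pi(P\cap\bigcap_{j\in J_f}N_j)$ with $J_f=f[T]$ and $b_f\in S_f$.

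The remaining issue, and the main obstacle, is the bound $|K|\le 2^\kappa$: the number of functions $f:T\to J$ is $\kappa^{2^\kappa}$, which is too many. I will first try to reduce by indexing with the image $J_f$ together with the coset $\pi(S_f)$, but the number of cosets is not obviously bounded. A better route is to use that $\pi(S_f)$ depends only on the data modulo the subgroup. Alternatively, I would choose $T$ more cleverly, e.g.\ by showing that only $\le 2^\kappa$ many distinct pairs $(J_f,\pi(S_f))$ are needed. To do this I would rerun the Erd\H{o}s--Rado argument of \cref{lem:test-set} directly on the family of sets $\pi(S_f)$. Suppose there are $(2^\kappa)^+$ many points $y_\alpha$, each requiring a new piece not covered by earlier chosen pieces. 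Coloring pairs by elements of $J$ (the index witnessing $z_{y_\beta}+t\in D_j$ at a suitable $t$) should produce a monochromatic triangle, giving a contradiction as before. This shows that $2^\kappa$ many of the $\pi(S_f)$ already cover the whole complement, and these form the family $(b_k,J_k)_{k\in K}$.
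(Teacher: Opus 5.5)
Your fiberwise reduction is correct and is the paper's own argument. Your $P^0=P\cap\ker\pi$ is the paper's vertical subgroup $G$, you apply \cref{lem:test-set} once to $(P^0,(N_j\cap P^0)_{j\in J})$ exactly as the paper does, and your $S_f$ is the paper's $E_f$.

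The gap is the bound $|K|\le 2^\kappa$. You rightly flag it as the main obstacle, but you do not prove it, and the Erd\H{o}s--Rado rerun you sketch does not work as described. In \cref{lem:test-set} the contradiction only requires placing $g_{\beta_1}$ in \emph{one} member $C^{\beta_1}_s$ of a union, so a single color is enough. Here the statement to be contradicted is $y_\alpha\notin\pi(S_{f_\beta})$, and $\pi(S_{f_\beta})$ is a single coset of the intersection subgroup $\pi(P\cap\bigcap_{j\in J_\beta}N_j)$. Refuting it needs one $z\in F_{y_\alpha}$ with $z+t\in D_{f_\beta(t)}$ for \emph{all} $t\in T$ simultaneously. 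A color in $J$ that records what happens at one ``suitable $t$'' cannot supply this.

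The missing idea is an elementary counting observation that needs no Ramsey theory. Suppose $S_f\neq\emptyset$ and $f(t)=f(t')=j$. Then $D_j-t$ and $D_j-t'$ are cosets of $N_j$ that both contain $S_f$, so they intersect and hence coincide. Choosing any $t_j\in f^{-1}(j)$ for each $j\in f[T]$, you get
\[ S_f=(a+P)\cap\bigcap_{j\in f[T]}(D_j-t_j), \]
so every nonempty $S_f$ is determined by a partial function $J\rightharpoonup T$. There are at most $(|T|+1)^{|J|}\le(2^\kappa)^\kappa=2^\kappa$ such partial functions, hence at most $2^\kappa$ distinct nonempty $S_f$. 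Index them by $K$, and for each $k\in K$ pick a representative $f_k$, a point $b_k\in S_{f_k}$, and set $J_k=f_k[T]$. This gives the required normal form, and it is exactly how the paper closes the argument.
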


\begin{proof}
    Let $C = \bigcap_{i \in I} C_i$, and note that $C$ is a coset of $P$. 
    Pick any point $a \in C$. 
    Then $\pi(C) = \pi(a) + \pi(P)$.
    
    Let $x \in \pi(C)$, and let $C_x, (D_j)_x \subseteq M^\zeta$ be the fibers over $x$:
    \begin{align*}
        C_x  = \set{ y \in M^\zeta \mid (x,y) \in C }, \qquad
        (D_j)_x  = \set{ y \in M^\zeta \mid (x,y) \in D_j }.
    \end{align*}
    Then 
    \[
        x \not \in \pi(Q) \iff C_x \subseteq \bigcup_{j \in J} (D_j)_x.
    \]
    Let $G, G_j$ be the vertical subgroups of $P, N_j$:
    \begin{align*}
        G = \set{ g \in M^\zeta \mid (0,g) \in P }, \qquad
        G_j = \set{ g \in M^\zeta \mid (0,g) \in N_j }.
    \end{align*}
    For any $y \in C_x$, $C_x - y = G$ and $(D_j)_x - y$ is empty or a coset of $G_j$.
    Let 
    \[
        B_j(x,y) = \set{ g \in G \mid y + g \in (D_j)_x }.
    \]
    For each $j$, $B_j(x,y)$ is either empty or a coset of $G \cap G_j$, and
    \[
        C_x \subseteq \bigcup_{j \in J} (D_j)_x \iff G \subseteq \bigcup_{j \in J} B_j(x,y).
    \]
    Applying \cref{lem:test-set} with $S = J$ to $(G, (G \cap G_j)_{j \in J})$, we obtain a test set $T \subseteq G$ with $|T| \leq 2^\kappa$ and 
    \[ 
        G \subseteq \bigcup_{j \in J} B_j(x,y) \iff T \subseteq \bigcup_{j \in J} B_j(x,y). \]
    Note that 
    \[ 
        t \in B_j(x,y) \iff (x,y) \in D_j - (0,t).
    \]
    So we have
    \begin{align*}
        x \not \in \pi(Q) 
        & \iff \exists y \in C_x \ \left[ T \subseteq \bigcup_{j \in J} B_j(x,y) \right] \\
        & \iff \exists y \in C_x \ \exists f: T \to J \ \forall t \in T \ \left[ (x,y) \in D_{f(t)} - (0,t) \right] \\
        & \iff \exists y \in C_x \ \exists f: T \to J \ \left[ (x,y) \in \bigcap_{t \in T} (D_{f(t)} - (0,t)) \right] \\
        & \iff x \in \bigcup_{f: T \to J} \pi \! \left( C \cap \bigcap_{t \in T} (D_{f(t)} - (0,t)) \right).
    \end{align*}
    For any $f: T \to J$, let
    \[ E_f = C \cap \bigcap_{t \in T} (D_{f(t)} - (0,t)), \]
    so the above becomes
    \[
        x \not \in \pi(Q) \iff x \in \bigcup_{f: T \to J} \pi(E_f).
    \]
    Each $E_f$ is either empty or a coset of $P \cap \bigcap_{t \in T} N_{f(t)} = P \cap \bigcap_{j \in f(T)} N_j$.
    For each nonempty $E_f$, pick an element $b_f \in E_f$, so that 
    \[  
        E_f = b_f + \left( P \cap \bigcap_{j \in f(T)} N_j \right).
    \]
    This gives us the following normal form for $\pi(Q)$:
    \begin{align*}
        \pi(Q) = \pi(C) \setminus \bigcup_{\substack{f: T \to J \\ E_f \neq \emptyset}} \left[ \pi(b_f) + \pi\! \left( P \cap \bigcap_{j \in f(T)} N_j \right) \right].
    \end{align*}

    There is one final bit of cardinality book-keeping to attend to. 
    We promised at most $2^\kappa$ many points $b_f$ and subsets $J_f$, but there may be more than $2^\kappa$ functions $T \to J$. However, we claim that there are only $2^\kappa$ many distinct nonempty $E_f$'s. 
    To see this, first rewrite $E_f$ as
    \[ 
        E_f = C \cap \bigcap_{t \in T} (D_{f(t)} - (0,t)) = C \cap \bigcap_{j \in f(T)} \bigcap_{t \in f^{-1}(j)} (D_j - (0,t)).
    \]
    Now if $f(t) = f(t') = j$, then $D_j - (0,t)$ and $D_j - (0,t')$ are both cosets of $N_j$.
    And if $E_f \neq \emptyset$, then these two cosets intersect, so they must be equal.
    So it suffices to pick for each $j \in f(T)$ a single $t_j \in f^{-1}(j) \subseteq T$. Define $\tau: J \to T \cup \set{\uparrow}$ by 
    \[
        \tau(j) = \begin{cases}
            t_j, & j \in f(T) \\
            \uparrow, & j \not \in f(T)
        \end{cases}.
    \]
    Then
    \[ 
        E_f = C \cap \bigcap_{j \in \tau^{-1}(T)} (D_j - (0,\tau(j))). 
    \]
    Thus every nonempty $E_f$ is determined by some map $\tau: J \to T \sqcup \set{\uparrow}$, and the number of such maps is at most 
    \[ (|T| + 1)^{|J|} \leq (2^\kappa)^\kappa = 2^\kappa. \]
    
    So, we may index the distinct nonempty sets $E_f$ as $\set{E_k \mid k \in K}$, where $|K| \leq 2^\kappa$. For each $k \in K$ choose a representative map $f_k: T \to J$ such that $E_k = E_{f_k}$. Choose $b_k \in E_{f_k}$ and put $J_k = f_k(T) \subseteq J$. Then 
    \[ 
        E_k = E_{f_k} = b_k + \left( P \cap \bigcap_{j \in J_k} N_j \right),
    \]
    and we have the desired normal form.
\end{proof}

\subsection{Quantifier elimination up to translates of pe formulas}

We are now ready to prove \cref{thm:inf-qe}. First we describe the construction of the sets of parameter tuples $I_\alpha$ for a fixed module $M$.

\subsubsection{Construction of the sets $I_\alpha$}

Let $I_0$ consist of all tuples of $0$s of length $< \theta$. Note $|I_0| = \theta \leq \kappa_0$.

For a limit ordinal $\lambda$, put $I_\lambda = \bigcup_{\beta < \lambda} I_\beta$. By induction, for each $\beta < \lambda$, $|I_\beta| \leq \kappa_\beta$. So 
\[
    |I_\lambda| \leq |\lambda| \cdot \sup_{\beta < \lambda} \kappa_\beta = \kappa_\lambda.
\]

For a successor ordinal $\alpha+1$, suppose we have already defined $I_\alpha$ with $|I_\alpha| \leq \kappa_\alpha$. 
For every $\epsilon,\zeta < \theta$, and every nonempty Boolean atom in $\Bool_\infty(\Cos_{\alpha,\epsilon+\zeta}(I_\alpha))$,
\[
    Q = \bigcap_{i \in I} C_i \setminus \bigcup_{j \in J} D_j,
\]
where each $C_i$ is a coset of $P_i$ and $D_j$ is a coset of $N_j$, 
apply \cref{lem:proj-atom} with $\kappa = \kappa_\alpha$ to obtain at most $2^{\kappa_\alpha}$ points $a^{Q}, b_k^Q$ and subsets $J_k \subseteq J$ such that 
\[ 
    \pi(Q) = \left( \pi(a^Q) + \pi \! \left( \bigcap_{i \in I} P_i \right) \right) \setminus \bigcup_k \left[ \pi(b^Q_k) + \pi \left( \bigcap_{i \in I} P_i  \cap \bigcap_{j \in J_k} N_j \right) \right].
\]
Add all these projected points $\pi(a^Q),\pi(b_k^Q)$ to $I_\alpha$ to form $I_{\alpha+1}$. 

Note that if $P_i = \phi_i(M)$ and $N_j = \psi_j(M)$ then 
\[ \pi\! \left( \bigcap_{i \in I} P_i \cap \bigcap_{j \in J_k} N_j \right) \]
is defined by
\[ 
    \exists \ybar \left( \bigwedge_{i \in I} \phi_i(\xbar,\ybar) \wedge \bigwedge_{j \in J_k} \psi_j(\xbar,\ybar) \right),
\]
which belongs to $\Lambda_{\alpha+1,\epsilon}$.

For each $\epsilon,\zeta < \theta$, there are at most $2^{\kappa_\alpha}$ many such Boolean atoms $Q$, because $Q$ is determined by a choice of a subset of $\Cos_{\alpha,\epsilon+\zeta}(I_\alpha)$ and
\[ 
    |\Cos_{\alpha,\epsilon+\zeta}(I_\alpha)| \leq |I_\alpha| \cdot |\Lambda_{\alpha,\epsilon+\zeta}| \leq \kappa_\alpha \cdot \kappa_\alpha = \kappa_\alpha.
\]
So altogether we have put at most 
\[
    \theta^2 \cdot  2^{\kappa_\alpha} \cdot 2^{\kappa_\alpha} = 2^{\kappa_\alpha}
\]
new points into $I_{\alpha + 1}$.

\subsubsection{Proof of the main theorem}

\begin{proof} [Proof of \cref{thm:inf-qe}]
Let $\phi(\xbar) \in L_{\infty,\theta,\alpha}$ with $|\xbar| = \epsilon < \theta$.

Let $\mc{B}_{\alpha,\epsilon} = \Bool_\infty(\Cos_{\alpha,\epsilon}(I_\alpha))$. We want to show that $\phi(M) \in \mc{B}_{\alpha,\epsilon}$. We argue by structural induction on $\phi$.

If $\phi$ is atomic, then $\phi$ is equivalent to a homogeneous linear equation, so $\phi(M) \in \Cos_{0,\epsilon}(I_0) \subseteq \mc{B}_{0,\epsilon}$.

The negation case follows by closure of $\mc{B}_{\alpha,\epsilon}$ under complements.

Suppose $\phi$ is $\bigwedge_i \psi_i(\xbar)$ or $\bigvee_i \psi_i(\xbar)$ with $\sup_i \rank(\psi_i) = \alpha$.
By induction each $\psi_i(M)$ is in the Boolean algebra of its own rank.
Since both $I_\beta$ and $\Lambda_\beta$ are increasing with $\beta$, each $\psi_i(M)$ is in $\mc{B}_{\alpha,\epsilon}$. 
By completeness of $\mc{B}_{\alpha,\epsilon}$, $\phi(M) \in \mc{B}_{\alpha,\epsilon}$.

Finally, suppose $\alpha = \beta+1$ and $\phi(\xbar) = \exists \ybar \psi(\xbar,\ybar)$, where $|\ybar| = \zeta < \theta$ and $\psi$ is rank $\beta$.
Then $\phi(M) = \pi(\psi(M))$.
By induction $\psi(M) \in \mc{B}_{\beta,\epsilon+\zeta}$.
So $\psi(M)$ is a union of Boolean atoms.
By \cref{lem:proj-atom} and the definition of $I_{\beta+1}$, the projection of each such atom is in $\mc{B}_{\beta+1,\epsilon}$.
Projections commute with unions, so $\phi(M) \in \mc{B}_{\beta+1,\epsilon}$.
\end{proof}

\subsection{Are the parameters necessary?}

As we have noted in the Introduction, the need for model-dependent parameters in
\cref{thm:inf-qe} is different from the first-order case, where every parameter-free formula is equivalent in a fixed module to a finite Boolean combination of parameter-free pp formulas. It is natural to wonder whether the parameters can be eliminated from the statement of \cref{thm:inf-qe}. The following example demonstrates that the parameters are indeed necessary, for any normal form using Boolean combinations of formulas that define additive subgroups.

\begin{example}
    There is a module $M$ and a parameter-free $L_{\omega_1,\omega,2}$ formula $\psi(x)$ such that $\psi(M)$ is not in the complete Boolean algebra generated by all additive subgroups of $M$. In particular, $\psi$ is not equivalent in $M$ to any Boolean combination of pe formulas of any rank.
\end{example}

\begin{proof}
    Let $R = \Z[t] / (5t, t^2)$ and let $M = {}_R R$ be the left regular $R$-module. So every element of $M$ can be written uniquely as $n + mt$, where $n \in \Z$ and $m \in \Z/5\Z$.

    Note that $t$ and $2t$ generate the same cyclic subgroup:
    \[ 
        \langle t \rangle = \langle 2t \rangle = \set{ 0, t, 2t, 3t, 4t }.
    \]
    So $t$ and $2t$ appear in exactly the same additive subgroups, and thus no Boolean combination of additive subgroups can distinguish $t$ from $2t$. In particular, no Boolean combination of parameter-free pe formulas of any rank can distinguish $t$ from $2t$.

    On the other hand, there is an $L_{\omega_1,\omega}$ formula $\psi$ that is true of $t$ and false of $2t$. Let $\phi(y)$ be the formula that says $y$ generates $M$ as an $R$-module:
    \[
        \phi(y) = \forall z \bigvee_{r \in R} z = ry, \qquad \phi(M) = \set{\pm 1 + mt \mid m \in \Z/5\Z }.
    \]
    And let $\psi(x)$ be the formula that says that $x = ty$ for some generator $y$:
    \[
        \psi(x) = \exists y (x = ty \wedge \phi(y)), \qquad \psi(M) = \set{t,-t} = \set{t, 4t}.
    \]
    Note that in $\psi(x)$, the $t$ appears not as a parameter but as a unary function symbol in the left $R$-module language. So $\psi$ is a parameter-free $L_{\omega_1,\omega}$ formula, and $\psi$ distinguishes $t$ from $2t$.

    So $\psi$ is not equivalent in $M$ to any Boolean combination of parameter-free pe formulas. On the other hand, note that using $t$ as a parameter, we can write $\psi(x) \equiv \phi_0(x - t) \vee \phi_0(x + t)$, where $\phi_0(x)$ is the pe formula defining the trivial subgroup $x = 0$.
\end{proof}

\section{A stability consequence}

In \cite{Shelah2012}, the pe normal form theorem is used to prove several stability results on modules. Here we record Shelah's proof of one such stability corollary in the notation of the present paper. 

\begin{lemma}
\label{lem:no-long-orders}
    Let $\epsilon,\zeta < \theta$, $\kappa$ infinite, $\mc{C}$ a collection of affine subsets of $M^\epsilon \times M^\zeta$ with $|\mc{C}| \leq \kappa$, and $B \in \Bool_\infty(\mc{C})$.
    There is no pair of sequences $(\abar_i)_{i < (2^\kappa)^+}, (\bbar_i)_{i < (2^\kappa)^+}$ in $M^\epsilon, M^\zeta$ respectively, such that 
    \[
        \forall i < j < (2^\kappa)^+, \quad (\abar_i, \bbar_j) \in B \ \& \ (\abar_j, \bbar_i) \not \in B.
    \]
\end{lemma}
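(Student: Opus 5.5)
We argue by contradiction, using the Boolean atoms of $\Bool_\infty(\mc{C})$ and the Erd\H{o}s--Rado theorem, in the spirit of the proof of \cref{lem:test-set}. Suppose such sequences exist, with $\lambda = (2^\kappa)^+$. For each point $(\abar,\bbar)$ let $\eta_{\abar,\bbar}\colon \mc{C}\to 2$ record which members of $\mc{C}$ contain it. Whether $(\abar,\bbar) \in B$ depends only on $\eta_{\abar,\bbar}$, since $B$ is a union of atoms $Q_\eta$. For $i<j<\lambda$ we have $(\abar_i,\bbar_j)\in B$ and $(\abar_j,\bbar_i)\notin B$, so $\eta_{\abar_i,\bbar_j}\neq \eta_{\abar_j,\bbar_i}$. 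Choose a witness $c(i,j) = (C,e)\in \mc{C}\times 2$ with $C$ such that $(\abar_i,\bbar_j)\in C$ iff $e=1$, while $(\abar_j,\bbar_i)\in C$ iff $e=0$. This is a colouring of pairs with at most $\kappa$ colours.

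Next we need a monochromatic set of the right size. Erd\H{o}s--Rado gives $(2^\kappa)^+\to(\kappa^+)^2_\kappa$, but we only need a finite homogeneous set; $(2^\kappa)^+ \to (3)^2_\kappa$ as in \cref{lem:test-set} gives a triangle, and we may use a homogeneous set of size $4$ if needed (also guaranteed). So fix indices $i_0<i_1<i_2<\dots$ all of whose pairs have colour $(C,e)$, where $C$ is affine, i.e.\ a coset $c + H$ of a subgroup $H\le M^\epsilon\times M^\zeta$ (nonempty members only; an empty $C$ cannot separate).

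The key step is the affine exchange identity. If $(\abar_i,\bbar_j)$, $(\abar_k,\bbar_j)$ and $(\abar_k,\bbar_l)$ lie in a coset $C$, then so does
\[
(\abar_i,\bbar_l) = (\abar_i,\bbar_j)-(\abar_k,\bbar_j)+(\abar_k,\bbar_l),
\]
because $C$ is closed under $x-y+z$. Complements are not closed under this, which is why the colour records $e$.

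Case $e=1$: for $i<j$, $(\abar_i,\bbar_j)\in C$ and $(\abar_j,\bbar_i)\notin C$. With $i_0<i_1<i_2$, the points $(\abar_{i_1},\bbar_{i_2})$, $(\abar_{i_0},\bbar_{i_2})$ and $(\abar_{i_0},\bbar_{i_1})$ all lie in $C$, so
\[
(\abar_{i_1},\bbar_{i_1}) = (\abar_{i_1},\bbar_{i_2})-(\abar_{i_0},\bbar_{i_2})+(\abar_{i_0},\bbar_{i_1}) \in C.
\]
This diagonal point does not contradict anything by itself, so take four indices $i_0<i_1<i_2<i_3$ and aim for the reversed pair $(\abar_{i_2},\bbar_{i_1})$. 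Write
\[
(\abar_{i_2},\bbar_{i_1}) = (\abar_{i_2},\bbar_{i_3}) - (\abar_{i_0},\bbar_{i_3}) + (\abar_{i_0},\bbar_{i_1}).
\]
All three terms on the right have first index smaller than second, so they lie in $C$. Hence $(\abar_{i_2},\bbar_{i_1})\in C$, contradicting $e=1$ for the pair $i_1<i_2$.

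Case $e=0$: now $(\abar_j,\bbar_i)\in C$ for $i<j$, and the symmetric identity gives the contradiction. For instance,
\[
(\abar_{i_1},\bbar_{i_2}) = (\abar_{i_1},\bbar_{i_0}) - (\abar_{i_3},\bbar_{i_0}) + (\abar_{i_3},\bbar_{i_2}),
\]
and all three terms on the right have first index larger than second, so they lie in $C$. Hence $(\abar_{i_1},\bbar_{i_2})\in C$, contradicting $e=0$.

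So a homogeneous set of size $4$ suffices, and Erd\H{o}s--Rado $(2^\kappa)^+\to(4)^2_\kappa$ follows from $(2^\kappa)^+\to(\kappa^+)^2_\kappa$. The main point needing care is choosing the colouring to record both the separating set and its direction $e$, so that a single affine set separates uniformly. After that, the four-point exchange argument is routine.
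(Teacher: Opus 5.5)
Your proof is correct and follows essentially the same route as the paper. You colour each pair by a separating member of $\mc{C}$ together with the direction of separation, extract a monochromatic $4$-set via Erd\H{o}s--Rado, and use closure of a coset under $x-y+z$ to force the reversed pair $(\abar_{i_2},\bbar_{i_1})$ into $C$; your $e=1$ identity is exactly the paper's, and your explicit $e=0$ identity spells out the case the paper dismisses as symmetric.
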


\begin{proof}
    Suppose otherwise. 
    For each $i < j$, since $(\abar_i,\bbar_j)$ and $(\abar_j,\bbar_i)$ disagree on $B$ they disagree on some $C_{ij} \in \mc{C}$. 
    So we may pick a color $(C_{ij}, \sigma_{ij}) \in \mc{C} \times 2$ where $C_{ij}$ is as above and
    \[
    \sigma_{ij} = \begin{cases}
        0, & (\abar_i,\bbar_j) \in C_{ij} \ \& \  (\abar_j,\bbar_i) \not\in C_{ij} \\
        1, & (\abar_i,\bbar_j) \not\in C_{ij} \ \& \ (\abar_j,\bbar_i) \in C_{ij}
    \end{cases}.
    \]
    There are at most $\kappa$ colors, so by the Erd\H{o}s-Rado theorem \cite{ER1956},
    \[ 
        (2^\kappa)^+ \to (4)^2_\kappa,
    \]
    there is a monochromatic set of four indices, say $i_0 < i_1 < i_2 < i_3$ with color $(C,\sigma)$. 
    If $\sigma = 0$ then $(\abar_{i_0}, \bbar_{i_1}), (\abar_{i_0}, \bbar_{i_3}), (\abar_{i_2}, \bbar_{i_3}) \in C$ so
    \[ 
        (\abar_{i_0}, \bbar_{i_1}) + (\abar_{i_2}, \bbar_{i_3}) - (\abar_{i_0}, \bbar_{i_3}) =  (\abar_{i_2}, \bbar_{i_1}) \in C, 
    \]
    which is a contradiction. 
    The case $\sigma = 1$ is symmetric.
\end{proof}

\begin{corollary}[Shelah]
    Let $\phi(\xbar,\ybar)$ be a parameter-free $L_{\infty,\theta,\alpha}$ formula.
    Then $\phi$ does not linearly order any pair of sequences of length $(\kappa_{\alpha+1})^+$ in any left $R$-module.
\end{corollary}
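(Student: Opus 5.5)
The plan is to combine \cref{thm:inf-qe} with \cref{lem:no-long-orders}. Fix a left $R$-module $M$ and a parameter-free $\phi(\xbar,\ybar) \in L_{\infty,\theta,\alpha}$ with $|\xbar| = \epsilon$ and $|\ybar| = \zeta$. Concatenate the variables into a single tuple of length $\epsilon+\zeta < \theta$; regularity of $\theta$ keeps this length below $\theta$. By \cref{thm:inf-qe} we get
\[
    \phi(M) \in \Bool_\infty(\Cos_{\alpha,\epsilon+\zeta}(I_\alpha)) \subseteq M^\epsilon \times M^\zeta .
\]
Take $\mc{C} = \Cos_{\alpha,\epsilon+\zeta}(I_\alpha)$. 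Every member of $\mc{C}$ is a coset $\abar + \psi(M)$ of a subgroup of $M^{\epsilon+\zeta}$, so it is an affine subset. Affine is exactly the property \cref{lem:no-long-orders} uses: closure under $u + w - v$.

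Next, bound $|\mc{C}|$. We have
\[
    |\mc{C}| \leq |I_\alpha| \cdot |\Lambda_\alpha| \leq \kappa_\alpha \cdot \kappa_\alpha = \kappa_\alpha,
\]
where the first factor is bounded by the construction of the sets $I_\alpha$ and the second by the earlier proposition. Applying \cref{lem:no-long-orders} with $\kappa = \kappa_\alpha$ and $B = \phi(M)$ shows that no pair of sequences of length $(2^{\kappa_\alpha})^+ = (\kappa_{\alpha+1})^+$ satisfies
\[
    (\abar_i,\bbar_j) \in \phi(M) \quad \text{and} \quad (\abar_j,\bbar_i) \notin \phi(M) \qquad \text{for all } i<j .
\]

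It remains to match this with ``$\phi$ linearly orders a pair of sequences''. I read that phrase as exactly the displayed condition, or its mirror image with the roles of $i<j$ and $j<i$ swapped. The mirror version reduces to the first in one of two ways. One is to reverse the indexing: a reversed well-order of length $(\kappa_{\alpha+1})^+$ is not an ordinal, so instead use the symmetric case $\sigma = 1$ in the lemma's proof. The other is to apply the lemma to the complement $M^{\epsilon+\zeta} \setminus B$, which lies in the same complete Boolean algebra. If linear ordering is defined via $\phi(\abar_i,\bbar_j) \iff i<j$, then this condition in particular gives the displayed one.

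The main obstacle is only this bookkeeping. It means checking that the cardinal bound $|\mc{C}| \leq \kappa_\alpha$ is uniform in $M$, which holds because the bound on $I_\alpha$ does not depend on $M$. It also means checking that the rank-$\alpha$ normal form suffices, so no $\alpha+1$ level of \cref{thm:inf-qe} is needed. The exponent $2^{\kappa_\alpha} = \kappa_{\alpha+1}$ comes solely from the Erd\H{o}s--Rado step inside \cref{lem:no-long-orders}.
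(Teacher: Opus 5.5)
Your proposal is correct and matches the paper's proof: it places $\phi(M)$ in $\Bool_\infty(\Cos_{\alpha,\epsilon+\zeta}(I_\alpha))$ via \cref{thm:inf-qe} at rank $\alpha$, bounds this family of affine generators by $|I_\alpha|\cdot|\Lambda_\alpha| \leq \kappa_\alpha$, and applies \cref{lem:no-long-orders} with $\kappa=\kappa_\alpha$, using $(2^{\kappa_\alpha})^+ = (\kappa_{\alpha+1})^+$. Your extra remarks on the mirror-image ordering, which the paper leaves implicit, are sound: passing to the complement is the clean route, and in fact the lemma's proof only uses that $(\abar_i,\bbar_j)$ and $(\abar_j,\bbar_i)$ disagree on $B$.
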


\begin{proof}
    Let $M$ be any left $R$-module.
    Let $\epsilon = |\xbar|$ and $\zeta = |\ybar|$.
    By \cref{thm:inf-qe}, $\phi(M) \in \Bool_\infty(\Cos_{\alpha,\epsilon+\zeta}(I_\alpha))$. 
    And $|\Cos_{\alpha,\epsilon+\zeta}(I_\alpha)| \leq \kappa_\alpha$. 
    So the conclusion follows by applying \cref{lem:no-long-orders} with $B = \phi(M)$ and $\mc{C} = \Cos_{\alpha,\epsilon+\zeta}(I_\alpha)$ and $\kappa = \kappa_\alpha$.
\end{proof}

\end{document}